\numberwithin{equation}{section}
\numberwithin{figure}{section}
\theoremstyle{plain}
\newtheorem{thm}{Theorem}
\theoremstyle{definition}
\newtheorem{defn}{Definition}
\theoremstyle{remark}
\newtheorem{rem}{Remark}
\theoremstyle{plain}
\newtheorem{prop}{Proposition}
\theoremstyle{plain}
\newtheorem{lem}{Lemma}
\begin{document}

\title{A percolation process on the binary tree where large finite clusters are frozen}

\author{Jacob van den Berg\thanks{CWI and VU University, Amsterdam; J.van.den.Berg@cwi.nl}, 
Demeter Kiss\thanks{CWI, research supported by NWO; D.Kiss@cwi.nl} and Pierre Nolin\thanks{Courant Institute, NYU, New York; nolin@cims.nyu.edu}}

\maketitle
\begin{abstract}
We study a percolation process on the planted binary tree,
where clusters freeze as soon as they become larger than some fixed
parameter $N.$ We show that as $N$ goes to infinity, the process
converges in some sense to the frozen percolation process introduced
by Aldous in \cite{Aldous2000}.

In particular, our results show that the asymptotic behaviour differs substantially from that on the square lattice, on which a similar process has been studied recently by van den Berg, de Lima and Nolin \cite{Berg}.
\end{abstract}

\textit{Key words and phrases:} percolation, frozen cluster. \\
\textit{AMS 2000 subject classifications.} Primary: 60K35; Secondary:82B43.

\section{Introduction and statement of results}

Aldous \cite{Aldous2000} introduced a percolation
process  where clusters are frozen when they get infinite, which can be described as follows. Let $G=\left(V,E\right)$ be an arbitrary simple graph with vertex
set $V,$ and edge set $E.$ On every edge $e\in E,$ there is a clock
which rings at a random time $\tau_{e}$ with uniform distribution
on $\left[0,1\right],$ these random times $\tau_{e}, \, e\in E,$
being independent of each other. At time $0,$ all the edges are closed, and then each edge $e=(u,v)\in E$ becomes open at time $\tau_{e}$
if the open clusters of $u$ and $v$ at that time are both finite
-- otherwise, $e$ stays closed. In other words, an open cluster stops growing as soon as it becomes infinite: it freezes, hence the name \emph{frozen
percolation} for this process.

The above description is informal -- it is not clear that such a process exists. In \cite{Aldous2000}, Aldous studies the special cases where
$G$ is the infinite binary tree (where every vertex has degree three), or the planted binary tree (where one vertex, the root vertex, has degree one, and all other vertices have degree three).
He showed that the frozen percolation process exists for these choices of $G.$ However, Benjamini
and Schramm \cite{Benjamini1999} showed that for $G=\mathbb{Z}^{2},$ there is no process satisfying
the aforementioned evolution. For more details see Remark (i) after Theorem 1 of \cite{Berg2001}.
It seems that no simple condition on the graph
$G$ is known that guarantees the existence of the frozen percolation process.

To get more insight in the non-existence for $\mathbb{Z}^2,$ a modification of the process was studied in \cite{Berg}.
In the modified process, an open cluster freezes as soon as it reaches size at least $N,$ where $N$ (a positive integer)
is the parameter of the model. See Definition \ref{def: size measurement} below for the meaning of `size`. 
Formally, the evolution of a frozen percolation process with parameter $N$ is the following.

At time $0$, every edge is closed. At time $t$, an edge $e=(u,v)\in E$
becomes open if $\tau_{(u,v)}=t$ \emph{and} the open clusters of
$u$ and $v$ at time $t$ have size strictly smaller than $N$ -- otherwise, $e$ stays closed. We call this modified process the $N$-parameter frozen percolation process.
Note that replacing $N$ by $\infty$ corresponds formally to Aldous' infinite frozen percolation process, 
therefore we sometimes refer to it as the $\infty$-parameter frozen percolation process.

The $N$-parameter frozen percolation process does exist on $\mathbb{Z}^2$ (and on many other other graphs including the binary tree), 
since it can be described as a finite-range interacting particle system. For general existence results of interacting particle systems,
see for example Chapter 1 of \cite{Liggett2005}.
Van den Berg, de Lima and Nolin \cite{Berg} study the distribution of the final cluster size (i.e. the size of the cluster of a given vertex at time $1$). 
They show that, for $\mathbb{Z}^2,$ the final cluster size
is smaller than $N$, but still of order of $N$, with probability bounded away from $0$. In the light of the earlier mentioned fundamental difference (the 
existence versus the non-existence of the $\infty$-parameter frozen percolation process),
it is natural to ask if the $N$-parameter process for the planted binary behaves, for large $N,$ very differently from that on $\mathbb{Z}^2.$
It turns out that this is indeed the case: We show that the $N$-parameter 
frozen percolation process for the planted binary tree converges (in some sense, see Theorem \ref{thm:volume freeze main}) to Aldous' process as the parameter goes to infinity. 
In particular, the probability that the final cluster has size less than $N,$ but of order $N,$ converges to $0$ (see \eqref{eq:bignotfrozen prob} below).

Before stating our main result, let us give some notation. We distinguish between different frozen percolation processes by using subscripts for the probability measures.
We thus use $\mathbb{P}_{N}$ to denote the probability measure for the $N$-parameter frozen percolation process where the size of a cluster is measured by its volume, while for the $\infty$-parameter frozen percolation process, we use the notation $\mathbb{P}_{\infty}.$
We denote the open cluster of the root vertex at time $t$ by $\mathcal{C}_{t}.$ For a connected sub-graph (cluster) $C$ of the graph $G,$ the volume
of $C,$ i.e. the number of edges of $C,$ will be denoted by $|C|.$ Our main result is the following.
\begin{thm}
\label{thm:volume freeze main} For the $N$-parameter frozen percolation process on the planted binary tree, where the size of a cluster is measured by its volume, we have
  \[\mathbb{P}_{N}\left(\mathcal{C}_{t}=C\right)\rightarrow\mathbb{P}_{\infty}\left(\mathcal{C}_{t}=C\right)\mbox{ as }N\rightarrow\infty\]
  for all finite clusters $C.$ Moreover 
  \begin{equation}
    \lim_{k\rightarrow\infty}\limsup_{N\rightarrow\infty}\mathbb{P}_{N}\left(k\leq\left|\mathcal{C}_{t}\right|<N\right)=0,\label{eq:bignotfrozen prob}
  \end{equation}
  and hence the probability that the open cluster of the root vertex is frozen also converges:\[
  \mathbb{P}_{N}\left(N\leq\left|\mathcal{C}_{t}\right|\right)\rightarrow\mathbb{P}_{\infty}\left(\left|\mathcal{C}_{t}\right|=\infty\right)\mbox{ as }N\rightarrow\infty.\]
\end{thm}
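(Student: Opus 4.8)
The plan is to reduce everything to one explicit formula for the law of $\mathcal{C}_t$ and then to analyse a single scalar quantity. Fix a finite cluster $C$ and write $n=|C|$. For $N>n$ the root-cluster can never freeze while it is a sub-cluster of $C$ (its volume stays $\le n<N$), so on the event $\{\mathcal{C}_t=C\}$ the freezing rule acts only on the subtrees hanging off the boundary edges of $C$. Removing a boundary edge $e=(x,y)$ with $x\in C$ detaches a planted binary tree below $y$ whose internal clocks are independent of everything touching $C$; by the self-similar structure this subtree carries an autonomous frozen percolation process. Hence $e$ is closed at time $t$ either because $\tau_e>t$, or because $\tau_e\le t$ and the cluster below $y$ has reached the freezing threshold before $\tau_e$. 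Letting $g_N(s)$ be the probability that the cluster below such a vertex reaches volume $\ge N$ by time $s$ (and $g_\infty(s)$ the probability that it becomes infinite by time $s$), and integrating out the independent uniform clocks, I would obtain, for every finite $C$ with $|C|=n$ and all $N>n$,
\[ \mathbb{P}_N(\mathcal{C}_t=C)=t^{\,n}\,b_N^{\,n+1},\qquad b_N:=(1-t)+\int_0^t g_N(s)\,ds, \]
where the exponent $n+1$ is the number of boundary edges of a root-cluster with $n$ edges on the planted binary tree. The identical derivation gives $\mathbb{P}_\infty(\mathcal{C}_t=C)=t^{\,n}b_\infty^{\,n+1}$ with $b_\infty:=(1-t)+\int_0^t g_\infty$.

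With this in hand, Part 1 reduces to the single convergence $b_N\to b_\infty$, i.e. $g_N\to g_\infty$. For $s<1/2$ the cluster below $y$ in the $N$-process is dominated by the ordinary (unfrozen) percolation cluster at parameter $s$, which is subcritical on the binary tree ($p_c=1/2$); its volume has an exponential tail, so $g_N(s)\to0=g_\infty(s)$. For $s\ge1/2$ I would use the recursive distributional structure: a planted binary tree splits at its branch vertex into two independent copies joined there, and in the $\infty$-process the downward cluster becomes infinite exactly when one of the two copies does and the connecting edge has opened in time. Writing $G$ for the freezing-time law of one copy, this gives $g_\infty(s)=1-(1-G(s))^2$ together with the fixed-point relation $G(s)=\int_0^s x\,d\big(1-(1-G(x))^2\big)$, whose explicit solution is $G(s)=1-\tfrac1{2s}$, hence $g_\infty(s)=1-\tfrac1{4s^2}$ on $[1/2,1]$; in particular $b_\infty=\tfrac1{4t}$, so that $t\,b_\infty=\tfrac14$. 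The finite-$N$ analogue is a volume recursion (a convolution of the two subtree-volume laws, stopped at $N$) whose freezing-time marginal I would show converges to this fixed point as $N\to\infty$, giving $g_N\to g_\infty$ and therefore Part 1 by dominated convergence.

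For Part 2 the formula gives $\mathbb{P}_N(|\mathcal{C}_t|=n)=a_n t^n b_N^{n+1}$, where $a_n$ is the number of root-clusters with $n$ edges; a standard singularity analysis of the binary-subtree generating function gives radius of convergence $1/4$ and $a_n\sim c\,4^n n^{-3/2}$. Thus
\[ \mathbb{P}_N\big(k\le|\mathcal{C}_t|<N\big)=b_N\sum_{n=k}^{N-1}a_n\,(t b_N)^n . \]
Because $t b_\infty=1/4$ is exactly the critical value and $\sum_{n\ge k}a_n 4^{-n}\sim c\sum_{n\ge k}n^{-3/2}\to0$, it suffices to control the overshoot of $t b_N$ above $1/4$: if $t b_N\le \tfrac14(1+C/N)$, then $(4tb_N)^n\le e^{C}$ uniformly for $n\le N$, whence the tail is bounded by $c\,e^{C}\sum_{n\ge k}n^{-3/2}$, which vanishes as $k\to\infty$ uniformly in $N$, and Part 2 follows. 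This $O(1/N)$ bound on $b_N-b_\infty$ is the heart of the matter and the main obstacle: the normalisation $\mathbb{P}_N(|\mathcal{C}_t|<N)\le1$ combined with $a_n\sim c\,4^n n^{-3/2}$ only yields the weaker estimate $t b_N\le\tfrac14(1+O(\log N/N))$, which is not enough, so a genuinely quantitative control of the rate $g_N\to g_\infty$ near $s=1/2$ (through the volume recursion, comparing the finite-$N$ stopped convolution with the explicit $\infty$-fixed point) will be required.

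Finally, Part 3 is bookkeeping. Writing $\mathbb{P}_N(N\le|\mathcal{C}_t|)=1-\mathbb{P}_N(|\mathcal{C}_t|<k)-\mathbb{P}_N(k\le|\mathcal{C}_t|<N)$, the first term converges to $\mathbb{P}_\infty(|\mathcal{C}_t|<k)$ by Part 1 (a finite sum), while the second is made arbitrarily small, uniformly in $N$, by Part 2 upon taking $k$ large; letting $N\to\infty$ and then $k\to\infty$ and using $\mathbb{P}_\infty(|\mathcal{C}_t|<k)\to\mathbb{P}_\infty(|\mathcal{C}_t|<\infty)$ identifies the limit as $\mathbb{P}_\infty(|\mathcal{C}_t|=\infty)$.
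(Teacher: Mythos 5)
Your reduction is exactly the paper's: the product formula $\mathbb{P}_N(\mathcal{C}_t=C)=t^{|C|}b_N^{|C|+1}$ is correct (your $b_N$ is the paper's $\beta_N(t)=\mathbb{P}_N(e_0\notin\mathcal{A}_t)$), and you have correctly isolated the two analytic facts on which everything hinges: (i) $b_N\to b_\infty$ pointwise, and (ii) the quantitative bound $tb_N\le\tfrac14+O(1/N)$. The problem is that you prove neither. For (i), your domination by subcritical percolation works on $[0,1/2)$, but on $[1/2,1]$ you only assert that the finite-$N$ volume recursion ``converges to the fixed point'' $G(s)=1-\tfrac{1}{2s}$; this is question-begging, since identifying which solution of the self-consistency equation the $N$-parameter process selects in the limit is precisely the content of the theorem (and the two subtree events you treat as independent in writing $g_\infty=1-(1-G)^2$ are in fact negatively correlated through the shared freezing constraint at $v_1$, so even the heuristic needs care). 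For (ii) you candidly state that the normalisation argument only yields $tb_N\le\tfrac14(1+O(\log N/N))$ and that a genuinely quantitative control near $s=1/2$ ``will be required'' --- that missing control is the technical heart of the matter and cannot be deferred: without it neither \eqref{eq:bignotfrozen prob} nor the final convergence statement is established.

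The paper supplies both (i) and (ii) at once by a different mechanism: conditioning on $\tau_{e_0}$ and summing over the (Catalan-counted) possible clusters of $v_1$ yields the autonomous ODE $\beta_N'(t)=-\frac{\beta_N(t)}{t}\big[C_N(t\beta_N(t))-1\big]$, where $C_N$ is the $N$th partial sum of the Catalan generating function; its implicit solution forces $t\beta_N(t)<x_N$, with $x_N$ the unique positive root of $C_N(x)=2$, and elementary monotonicity then gives $0\le\beta_N(t)-\beta_\infty(t)\le2\left(x_N-\tfrac14\right)$ on all of $[0,1]$. The estimate $x_N-\tfrac14\le K/N$ follows from a local analysis of $\sqrt{N}\left(C_N\left(\tfrac14+\tfrac{x}{4N}\right)-2\right)$ via Stirling's formula and a Riemann-sum approximation. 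To complete your proposal you would need to reproduce something equivalent to this ODE/implicit-equation analysis, or find another route to the $O(1/N)$ overshoot bound; as written, Part 1 is proved only for $t<1/2$, and Parts 2 and 3 are not proved.
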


The theorem above considers the case where size is measured by the volume. It can be extended to other notions of size.
To state our more general result, we need to introduce some additional definitions. We denote the planted binary tree by $T,$ and by $\mathscr{C}$ the set of finite clusters (finite connected components) of $T.$ 

\begin{defn}\label{def:hom}
  We say that a function $h$ on the set of vertices of $T$ into itself is a \emph{homomorphism} if it maps any edge $(s,t)$, with $s$ closer to the root than $t$, to an edge $(h(s),h(t))$, with $h(s)$ closer to the root than $h(t)$.
\end{defn}

\begin{defn}
  \label{def: size measurement}A \emph{good} size function of clusters is a
  function $s:\mathscr{C}\rightarrow\mathbb{N},$ which satisfies the following conditions:
  \begin{enumerate}
    \item \label{it: trans inv} \textit{Compatibility with homomorphisms.} For all $C\in\mathscr{C}$ and injective homomorphisms $h$ we have $s(h(C))=s(C).$
    \item \label{it: goes to inf} \textit{Finiteness.} For all $N\in\mathbb{N}$ and for any vertex $v,$ the set $\{C\in \mathscr{C} \, \left| \, v\in C,\,s(C)\leq N\right.\}$ is finite.
    \item \label{it: inc size} \textit{Monotonicity.} If $C,C'\in\mathscr{C}$ with $C\subseteq C',$ then $s\left(C\right)\leq s\left(C'\right).$   
    \item \label{it: bounded by vol} \textit{Boundedness above by the volume.} For all $C\in\mathscr{C},$ we have $s(C)\leq|C|.$
  \end{enumerate}
\end{defn}
The conditions of Definition \ref{def: size measurement} are satisfied for most of the usual
size functions such as the diameter (the length of the longest self-avoiding path in the cluster) or the depth 
(the length of the longest self-avoiding path starting from the root).

We indicate the dependence on the size function with an additional superscript: $\mathbb{P}_{N}^{(s)}$ 
denotes the probability measure for the $N$-parameter frozen percolation process with size function $s.$
With this notation, the following generalization of Theorem \ref{thm:volume freeze main} holds.
\begin{thm}
  \label{thm:gen size freeze main} Let $s$ be a good size function for the planted binary tree. Then we have
  \begin{equation}
    \mathbb{P}_{N}^{(s)}\left(\mathcal{C}_{t}=C\right)\rightarrow\mathbb{P}_{\infty}\left(\mathcal{C}_{t}=C\right)\mbox{ as }N\rightarrow\infty \label{eq:finite cluster prob conv}
  \end{equation}
    for all finite clusters $C.$ Moreover
  \begin{equation}
    \lim_{k\rightarrow\infty}\limsup_{N\rightarrow\infty}\mathbb{P}_{N}^{(s)}\left(k\leq s\left(\mathcal{C}_{t}\right)<N\right)=0, \label{eq:gen main - 2}
  \end{equation}
  and hence the probability that the open cluster of the root vertex is frozen also converges:\[
  \mathbb{P}_{N}^{(s)}\left(N\leq s\left(\mathcal{C}_{t}\right) \right)\rightarrow\mathbb{P}_{\infty}\left(\left|\mathcal{C}_{t}\right|=\infty\right).\]
\end{thm}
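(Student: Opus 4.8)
The plan is to deduce Theorem \ref{thm:gen size freeze main} from the already-established volume case, Theorem \ref{thm:volume freeze main}, using the four axioms of a good size function to relate $s(\mathcal{C}_t)$ to the volume $|\mathcal{C}_t|$. First I observe that the last assertion (convergence of the freezing probability) follows formally from the two displayed limits \eqref{eq:finite cluster prob conv} and \eqref{eq:gen main - 2}, exactly as in the volume case: writing
\[
\mathbb{P}_N^{(s)}\!\left(s(\mathcal{C}_t)<N\right)=\mathbb{P}_N^{(s)}\!\left(s(\mathcal{C}_t)<k\right)+\mathbb{P}_N^{(s)}\!\left(k\le s(\mathcal{C}_t)<N\right),
\]
the first term is a finite sum over clusters $C$ with $s(C)<k$ --- a finite set by condition \ref{it: goes to inf} --- so by \eqref{eq:finite cluster prob conv} it converges to $\mathbb{P}_\infty(s(\mathcal{C}_t)<k)=\mathbb{P}_\infty(|\mathcal{C}_t|<\infty,\,s(\mathcal{C}_t)<k)$, while the second term is controlled by \eqref{eq:gen main - 2} after sending $k\to\infty$. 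Since the $\infty$-parameter process freezes a cluster exactly when it becomes infinite, its law does not depend on the size function, and $\mathbb{P}_\infty(s(\mathcal{C}_t)<k)\uparrow\mathbb{P}_\infty(|\mathcal{C}_t|<\infty)$ as $k\to\infty$. Hence it suffices to prove \eqref{eq:finite cluster prob conv} and \eqref{eq:gen main - 2}.

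The key structural input is a comparison between $s$-size and volume. Condition \ref{it: bounded by vol} gives $s(C)\le|C|$ directly, so $\{|C|<N\}\subseteq\{s(C)<N\}$. In the other direction, conditions \ref{it: goes to inf} and \ref{it: trans inv} yield a finite, nondecreasing function $N\mapsto V(N)$ with $V(N)\to\infty$ such that every cluster $C$ (rooted at any fixed vertex, uniformly by homomorphism invariance) with $s(C)\le N$ satisfies $|C|\le V(N)$; indeed the set of such $C$ is finite, so its volumes are bounded. Thus
\[
\{|C|<N\}\subseteq\{s(C)<N\}\subseteq\{|C|\le V(N)\},
\]
and $V(N)\ge N$. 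Combined with the monotonicity condition \ref{it: inc size} --- which makes the $s$-size of a growing cluster nondecreasing, so that freezing is a well-defined first-passage event --- this means that in the $s$-parameter process a cluster cannot freeze before its volume reaches $N$, and must have frozen once its volume exceeds $V(N)$.

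With this sandwich, \eqref{eq:gen main - 2} reduces to the volume statement \eqref{eq:bignotfrozen prob}: the inclusion gives $\{k\le s(\mathcal{C}_t)<N\}\subseteq\{k\le|\mathcal{C}_t|\le V(N)\}$, and I would bound the $\mathbb{P}_N^{(s)}$-probability of the right-hand event by comparing the evolution of the root cluster in the $s$-$N$ process to the volume process and invoking \eqref{eq:bignotfrozen prob}. For \eqref{eq:finite cluster prob conv}, I would localize the event $\{\mathcal{C}_t=C\}$ (for fixed finite $C$, and $N>|C|$ so that $C$ itself never freezes): on the planted binary tree this event is determined by $C$ together with, for each boundary edge, either a late clock or the freezing of the neighboring subtree-cluster on the far side. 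Using the self-similarity of the tree provided by condition \ref{it: trans inv}, each such neighboring subtree is itself a frozen-percolation process, and Theorem \ref{thm:volume freeze main} (applied to that subtree) shows that its probability of freezing in the $s$-$N$ process converges, as $N\to\infty$, to its probability of becoming infinite in the $\infty$-process --- the same limit as in the volume process. Assembling the finitely many boundary contributions then gives \eqref{eq:finite cluster prob conv}.

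The main obstacle is that frozen percolation is \emph{not} monotone in the parameter or in the size function, so there is no pathwise coupling realizing the $s$-$N$ process between two volume processes; the comparison must be carried out at the level of the distribution of $\mathcal{C}_t$. The delicate point is the discrepancy window $N\le|C|\le V(N)$: a subtree-cluster may reach volume $N$ (and thus freeze under the volume-$N$ rule) while its $s$-size is still below $N$, so that under the $s$-$N$ rule it keeps growing. Showing that this window does not affect the limiting law of $\mathcal{C}_t$ is precisely where \eqref{eq:bignotfrozen prob} is indispensable: it guarantees that the probability of a subtree lingering with large volume but $s$-size below the threshold tends to $0$, so the freezing events of the two processes asymptotically coincide and the limits match.
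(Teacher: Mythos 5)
There is a genuine gap, and you have in fact put your finger on it yourself without resolving it. Your entire strategy is to transfer Theorem \ref{thm:volume freeze main} to the $s$-parameter process via the cluster-level sandwich $\{|C|<N\}\subseteq\{s(C)<N\}\subseteq\{|C|\le V(N)\}$. But the quantities you need to control, such as $\mathbb{P}_{N}^{(s)}\left(k\leq\left|\mathcal{C}_{t}\right|\leq V(N)\right)$, are probabilities under the law of the \emph{$s$-$N$ dynamics}, whereas \eqref{eq:bignotfrozen prob} is a statement about the law of the \emph{volume-$N$ dynamics}. As you note, frozen percolation is not monotone in the freezing rule: a cluster freezing earlier under one rule can cause other clusters to grow larger later, so there is no pathwise domination between the $s$-$N$ process and any volume-$M$ process, and no coupling is exhibited that would let \eqref{eq:bignotfrozen prob} bound an event under $\mathbb{P}_{N}^{(s)}$. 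Your final paragraph names this as ``the delicate point'' but offers no mechanism to overcome it. The same circularity appears in your sketch of \eqref{eq:finite cluster prob conv}: you invoke ``Theorem \ref{thm:volume freeze main} applied to that subtree'' to conclude that the freezing probability of a neighbouring subtree \emph{in the $s$-$N$ process} converges, but Theorem \ref{thm:volume freeze main} says nothing about the $s$-$N$ process. What is actually needed --- and what your proposal never establishes --- is the convergence of $\beta_{N}^{(s)}(t)=\mathbb{P}_{N}^{(s)}(e_{0}\notin\mathcal{A}_{t})$ to $\beta_{\infty}(t)$, together with a quantitative bound; once one has $\mathbb{P}_{N}^{(s)}(\mathcal{C}_{t}=C)=\beta_{N}^{(s)}(t)\left(t\beta_{N}^{(s)}(t)\right)^{|C|}$, everything reduces to this.

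The paper does not reduce to the volume case at the level of processes; it reruns the whole generating-function analysis. One replaces $\frac{C_{N}(x)-1}{x}$ by $G_{N}^{(s)}(x)=\sum_{k}a_{k,N-1}^{(s)}x^{k}$, where $a_{k,N-1}^{(s)}$ counts clusters of volume $k$ with $s$-size at most $N-1$, derives the ODE $(\beta_{N}^{(s)})'=-(\beta_{N}^{(s)})^{2}G_{N}^{(s)}(t\beta_{N}^{(s)})$, and obtains $0\leq\beta_{N}^{(s)}-\beta_{\infty}\leq2\left(x_{N}^{(s)}-\frac{1}{4}\right)$ where $x_{N}^{(s)}$ is the positive root of $xG_{N}^{(s)}(x)=1$. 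Condition \ref{it: bounded by vol} is used exactly where your sandwich intuition lives, but at the level of generating functions rather than couplings: $s(C)\leq|C|$ gives $a_{k,N-1}^{(s)}\geq c_{k+1}$ for $k\leq N-1$, hence $G_{N}^{(s)}(x)\geq\frac{C_{N}(x)-1}{x}$ and therefore $\frac{1}{4}\leq x_{N}^{(s)}\leq x_{N}$, so the rate from Proposition \ref{pro:bound xn} transfers. Your opening paragraph (deducing the third assertion from the first two) is fine and matches the paper, but the proofs of \eqref{eq:finite cluster prob conv} and \eqref{eq:gen main - 2} as proposed do not go through.
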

\begin{rem}
  Equation \eqref{eq:finite cluster prob conv} is valid even without condition \ref{it: bounded by vol}  of Definition \ref{def: size measurement}.
\end{rem}

\begin{rem}
  The behaviour described in Theorem \ref{thm:gen size freeze main} is very different from that of the square lattice:
  In \cite{Berg} it is showed that for $G=\mathbb{Z}^{2},$ and for any fixed $a,b\in\mathbb{R}$ with $0<a<b<1$
  \begin{equation}
    \liminf_{N\rightarrow\infty}\mathbb{P}_{N}^{(diam)}\left(aN<diam\left(\mathcal{C}_{t}\right)<bN\right)>0, \label{eq:res Rob}
  \end{equation}
  where $diam$ denotes the diameter, while this probability tends to $0$ when $G$ is the planted binary tree, thanks to Eq.\eqref{eq:gen main - 2}. \end{rem}

Let us finally mention that since Aldous' seminal paper \cite{Aldous2000}, several related questions were studied. For example, Chapter 4 of \cite{Brouwer2005} considers frozen percolation on $\mathbb{Z}$, and variants of that model are investigated in \cite{Rath2009} and \cite{Bertoin2010}, respectively on the complete graph and on the binary tree.

The paper is organized as follows. In Section \ref{sec: volume freeze}
we prove Theorem \ref{thm:volume freeze main}. The proof relies on a careful study of the probability that the root edge is closed at time $t,$
which we denote by $\beta_{N}(t).$ In Sections \ref{sub:setting} and
\ref{sub:differential equation for beta} we show that $\beta_{N}$
satisfies a first order differential equation which involves the generating function of the Catalan numbers. In Section \ref{sub:sol ode for beta},
we give an implicit solution of the aforementioned differential equation, and we use this in Sections
\ref{sub:bound beta} and \ref{sub:conv beta} to prove the convergence
of $\beta_{N}$ as $N\rightarrow\infty.$ We finish the proof of Theorem
\ref{thm:volume freeze main} in Section \ref{sub:end proof main thm volume}. In Section \ref{sec:other size}
we point out the changes in the proof of Theorem \ref{thm:volume freeze main} required
to prove Theorem \ref{thm:gen size freeze main}.

\section{\label{sec: volume freeze} Proof of Theorem \ref{thm:volume freeze main}}

\subsection{\label{sub:setting}Setting}

In this section, we consider the $N$-parameter frozen percolation process where the size of a cluster is measured by its number of edges -- we recall the notation $\mathbb{P}_N.$ 
We denote by $\mathcal{A}_t$ the set of open edges at time $t.$

Let $e_{0}=(v_{0},v_{1})$ be the root edge, where $v_{0}$ is the
root vertex. The central quantity of our analysis is the following
probability:
\begin{equation}
  \beta_{N}\left(t\right):=\mathbb{P}_{N}\left(e_{0}\notin\mathcal{A}_{t}\right)=\mathbb{P}_{N}\left(e_{0}\mbox{ is closed at time }t\right)\label{eq:def beta}
\end{equation}
(note that $\beta_{N}(t)=\mathbb{P}_{N}\left(\left|\mathcal{C}_{t}\right|=0\right)$).

\begin{rem}
\label{rem:lbound beta prime}
From the definition, it is easy to see that $\beta_{N}\left(t\right)$
is decreasing in $t.$ Moreover, from the equality 
\begin{equation}
  \beta_{N}\left(t\right)=1-t+\mathbb{P}_{N}\left(\tau_{e_{0}}<t\mbox{ but }e_{0}\mbox{ is closed at time }t\right), \label{eq:prop beta}
\end{equation}
we can see that $(\beta_{N}\left(t\right) - 1 + t)$ is increasing in $t$.
\end{rem}

For $e\in E,$ $e\neq e_{0},$ $T\setminus\left\{ e\right\} $ has
two connected components, one which contains $e_{0},$ and one which
does not. Let $T_{e}$ denote the component which does not contain $e_{0}$, together
with the edge $e$: $T_{e}$ is a subtree of $T$, isomorphic
to $T$.

For any edge $e_{1},$ we define the frozen percolation process on
$T_{e_{1}}$ in the following way. We consider the set of random variables
$\tau_{e},$ $e\in T_{e_{1}},$ and define the frozen percolation
process on $T_{e_{1}}$ in the same way as we did for $T.$ We denote the set of open edges at time $t$ by $\mathcal{A}_{t}\left(e_{1}\right).$
Note that the process $\mathcal{A}_{t}\left(e_{1}\right)$ has the
same law as $\mathcal{A}_{t}.$ Moreover, $\mathcal{A}_{t}\left(e_{1}\right)$
and $\mathcal{A}_{t}$ are coupled via the random variables $\tau_{e},$
$e\in T_{e_{1}}.$

In the following, we think of clusters as sets of edges. The outer boundary
of a cluster $C\subseteq E$, denoted by $\partial C$, is the set of edges in $E\setminus C$ that have a common endpoint with
one of the edges of $C$.

\subsection{\label{sub:differential equation for beta}Differential equation
for $\beta_{N}$}

Let us denote the $k$th Catalan number by $c_{k}=\binom{2k}{k}/\left(k+1\right),$ and recall that the generating function of the Catalan numbers is (see for example Section 2.1 of \cite{Drmota2009})\[
C\left(x\right)=\sum_{k=0}^{\infty}c_{k}x^{k}=\frac{1-\sqrt{1-4x}}{2x}=\frac{2}{1+\sqrt{1-4x}},\]
which converges for $\left|x\right| \leq \frac{1}{4}$. If we denote by $C_N$ the $N$th partial sum, that is
$$C_{N}\left(x\right)=\sum_{k=0}^{N}c_{k}x^{k},$$
we have:
\begin{lem}
  \label{lem:diff beta}$\beta_{N}$ is differentiable, and its derivative satisfies
  \begin{equation}
    \beta_{N}'\left(t\right)=-\frac{\beta_{N}\left(t\right)}{t}\big[C_{N}\left(t\beta_{N}\left(t\right)\right)-1\big].\label{eq:diff beta}
  \end{equation}
\end{lem}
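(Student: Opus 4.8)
The plan is to reduce the statement to a computation of the size distribution of the cluster of $v_1$ ``grown from below'', and then to evaluate that distribution explicitly using the self-similarity of $T$ together with the Catalan recursion.

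\emph{Step 1 (reduction to a cluster probability).} Write $e_0=(v_0,v_1)$ and let $e_1,e_2$ be the two child edges of $v_1$. Let $D_t$ denote the open cluster of $v_1$ at time $t$ \emph{when $e_0$ is forced to stay closed}, i.e.\ the cluster of $v_1$ inside $T_{e_1}\cup T_{e_2}$; equivalently, $D_t$ is the cluster of $v_1$ in the full process on the event $\{\tau_{e_0}>t\}$. Since $\tau_{e_0}$ is uniform on $[0,1]$ and independent of the clocks in $T_{e_1}\cup T_{e_2}$, conditioning on $\tau_{e_0}$ and using \eqref{eq:prop beta} I would first obtain the integral identity
\[
\beta_N(t)=1-t+\int_0^t \mathbb{P}_N\big(|D_s|\ge N\big)\,ds,
\]
because $e_0$ is closed at time $t$ exactly when either its clock has not rung ($\tau_{e_0}>t$, probability $1-t$) or it rang at some $s<t$ at which the cluster $D_s$ of $v_1$ had already reached size $N$ and frozen. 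As the integrand is bounded, $\beta_N$ is continuous; once the formula of Step 2 shows the integrand is continuous, the fundamental theorem of calculus gives differentiability and $\beta_N'(t)=-\mathbb{P}_N(|D_t|<N)$. It then remains to compute $\mathbb{P}_N(|D_t|<N)$.

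\emph{Step 2 (decoupling; the main step).} Fix a finite subtree $C$ containing $v_1$ with $|C|=k<N$ edges. I claim $\mathbb{P}_N(D_t=C)=t^{\,k}\,\beta_N(t)^{\,k+2}$. The event $\{D_t=C\}$ is the intersection of $\{\tau_e\le t\}$ over the $k$ edges $e\in C$ and of $\{e'\text{ closed at }t\}$ over the edges $e'$ leaving $C$ (the unused child slots of the vertices of $C$; for $k=0$ these are $e_1,e_2$). The point is that, because $|C|<N$, no freezing ever occurs \emph{inside} $C$: whenever an edge of $C$ rings, the two sub-clusters it joins are contained in $C$ and hence have size $\le k-1<N$, so that edge opens iff its clock has rung. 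Consequently the edges of $C$ are open iff $\tau_e\le t$ (each of probability $t$), and moreover the cluster of the inner endpoint of any leaving edge $e'$ stays of size $<N$, so it never blocks $e'$; therefore $e'$ is closed at time $t$ precisely when it is closed in the \emph{isolated} process on the subtree $T_{e'}$, which has the same law as the whole process and so closes its root edge with probability $\beta_N(t)$. Since the clocks of $C$ and of the disjoint subtrees $T_{e'}$ are independent, the product follows. Making this argument rigorous --- checking that on $\{D_t=C\}$ the full process and the isolated subtree processes agree on each leaving edge --- is the main obstacle, and I would carry it out using the coupling of $\mathcal{A}_t(e')$ and $\mathcal{A}_t$ via shared clocks introduced in Section~\ref{sub:setting}.

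\emph{Step 3 (combinatorics and assembly).} The number $a_k$ of shapes $C$ with $k$ edges is the number of rooted subtrees of the binary tree at $v_1$; encoding each of the two ordered children as ``absent'' or ``present with its own subtree'' gives the generating function identity $\sum_k a_k x^k=\big(1+x\sum_k a_k x^k\big)^2$, whose relevant solution is $C(x)^2$. Since $C(x)=1+xC(x)^2$ yields $C(x)^2=(C(x)-1)/x$, I read off $a_k=c_{k+1}$. As each such $C$ has exactly $k+2$ leaving edges, Step 2 gives
\[
\mathbb{P}_N\big(|D_t|<N\big)=\sum_{k=0}^{N-1}c_{k+1}\,t^{\,k}\,\beta_N(t)^{\,k+2}
=\frac{\beta_N(t)}{t}\sum_{j=1}^{N}c_j\big(t\beta_N(t)\big)^{j}
=\frac{\beta_N(t)}{t}\big[C_N(t\beta_N(t))-1\big],
\]
using $c_0=1$ in the last step. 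Combined with $\beta_N'(t)=-\mathbb{P}_N(|D_t|<N)$ from Step 1, this is exactly \eqref{eq:diff beta}.
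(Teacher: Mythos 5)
Your proposal is correct and follows essentially the same route as the paper: the same conditioning on $\tau_{e_0}$ to get an integral formula for $\beta_N$, the same key decoupling step expressing $\mathbb{P}_N(D_t=C)=t^{|C|}\beta_N(t)^{|C|+2}$ via the isolated subtree processes $\mathcal{A}_t(e')$ (the paper likewise identifies this as the crux and handles it with the same coupling), and the same Catalan count $c_{k+1}$ of cluster shapes. The only cosmetic differences are that you write the integral identity in terms of $\mathbb{P}_N(|D_s|\ge N)$ rather than its complement and derive $a_k=c_{k+1}$ by a generating-function argument where the paper cites a reference.
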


\begin{rem}\label{rem:ode beta}
Since $C_{N}\left(x\right)=1+x+\ldots,$ Eq.\eqref{eq:diff beta}
is well defined for $t=0.$ In the
introduction we pointed out that the model exists, in particular the differential
equation \eqref{eq:diff beta} with initial condition $\beta_{N}\left(0\right)=1$
has a solution. On the other hand, the general theory of ordinary differential equations
provides uniqueness.
\end{rem}
\begin{proof}
Let us denote the open cluster of $v_{1}$ without the edge $e_{0}$
at time $s$ by $\tilde{\mathcal{C}}_{s}$.

We use the defining evolution of the $N$-parameter frozen percolation process as follows: At time $s,$ if $\tau_{e_{0}}=s,$ then
$e_{0}$ tries to become open, and it succeeds if and only if $\left|\tilde{\mathcal{C}}_{s}\right|\leq N-1.$
By conditioning on $\tau_{e_{0}},$ we get that 
\begin{align}
  \beta_{N}\left(t\right) & =1-\int_{0}^{t}\mathbb{P}_{N}\left(\left|\tilde{\mathcal{C}}_{s}\right|<N|\tau_{e_{0}}=s\right)ds\nonumber \\
  & =1-\int_{0}^{t}\sum_{k=0}^{N-1}\mathbb{P}_{N}\left(\left|\tilde{\mathcal{C}}_{s}\right|=k\,|\,\tau_{e_{0}}=s\right)ds.\label{eq:beta 1-int}
\end{align}
First we compute the probability $\mathbb{P}_{N}\left(\tilde{\mathcal{C}}_{s}=C\,|\,\tau_{e_{0}}=s\right)$
for $\left|C\right|\leq N-1.$ If $\tilde{\mathcal{C}}_{s}=C,$ $\left|C\right|\leq N-1,$
then for all $e\in C,$ $e$ is open at time $s.$ Moreover, for all
$e'\in\partial C\setminus\left\{ e_{0}\right\} ,$ $e'$ is closed at time $s.$
The latter event can happen in two ways: $e'$ is closed at time $s$ in its own
frozen percolation process on $T_{e'},$ or there is a big cluster
at time $s$ in $T\setminus T_{e'}$ touching $e'.$ Since $\left|C\right|<N,$
on the event $\left\{ \tilde{\mathcal{C}}_{s}=C,\tau_{e_{0}}=s\right\} ,$ the latter
cannot happen. Hence 
\[\left\{ \tilde{\mathcal{C}}_{s}=C,\tau_{e_{0}}=s\right\} \subseteq\bigcap_{e'\in\partial C\setminus\left\{ e_{0}\right\} }\left\{ e'\notin\mathcal{A}_{s}\left(e'\right)\right\} =:A.\]
 Note that the event $A$ and the random variables $\tau_{e}$, $e\in C$
are independent. Moreover, conditionally on $A,$ the events $e\in\mathcal{A}_{s},\, e\in C$
are independent, and each of them has probability $s$, so that 
\begin{equation}
  \mathbb{P}_{N}\left(\left.\tilde{\mathcal{C}}_{s}=C\,\right|e\,'\notin\mathcal{A}_{s}\left(e'\right)\mbox{ for }e'\in\partial C\setminus\left\{ e_{0}\right\} ,\tau_{e_{0}}=s\right)=s^{\left|C\right|}.\label{eq:int form beta 1}
\end{equation}
Recall that the processes $\mathcal{A}_{s}\left(e'\right),$ $e'\in\partial C\setminus\left\{ e_{0}\right\}$
are independent and have the same law as $\mathcal{A}_{s}.$ Hence
the events $e'\notin\mathcal{A}_{s}\left(e'\right),e'\in\partial C\setminus\left\{ e_{0}\right\} $
are independent, and each of them has probability $\beta_{N}\left(s\right).$
This together with \eqref{eq:int form beta 1} gives that 
\[\mathbb{P}_{N}\left(\tilde{\mathcal{C}}_{s}=C\,|\,\tau_{e_{0}}=s\right)=s^{\left|C\right|}\beta_{N}\left(s\right)^{\left|\partial C\setminus\left\{ e_{0}\right\} \right|}.\]
Using that $\left|\partial \tilde{\mathcal{C}}_{s}\setminus\left\{ e_{0}\right\} \right|=\left|\tilde{\mathcal{C}}_{s}\right|+2$,
we get
\begin{equation}
  \mathbb{P}_{N}\left(\tilde{\mathcal{C}}_{s}=C\,|\,\tau_{e_{0}}=s\right)=\beta_{N}\left(s\right)^{2}\left(s\beta_{N}\left(s\right)\right)^{\left|C\right|}. \label{eq:int form beta 2}
\end{equation}
It is well known that the number of clusters $C\subseteq T$ having $k$ edges which
contain the vertex $v_{1}$ but not the edge $e_{0}$ is $c_{k+1},$ the $(k+1)$th Catalan number (see for example Theorem 2.1 of \cite{Drmota2009}).
By this and \eqref{eq:int form beta 2} we can rewrite \eqref{eq:beta 1-int} as follows:
\begin{align}
  \beta_{N}\left(t\right) & =1-\int_{0}^{t}\beta_{N}\left(s\right)^{2}\sum_{k=0}^{N-1}c_{k+1}\left(s\beta_{N}\left(s\right)\right)^{k}ds.\nonumber \\
  & =1-\int_{0}^{t}\frac{\beta_{N}\left(s\right)}{s}\left(C_{N}\left(s\beta_{N}\left(s\right)\right)-1\right)ds.\label{eq:int beta}
\end{align}
Recall that $C_N(x)=1+x+\ldots,$ hence for every fixed positive integer $N,$ the integrand in \eqref{eq:int beta}
is bounded (since $s,\beta_{N}\left(s\right)\in\left[0,1\right]$ and $C_N$ is continuous).
Thus we can differentiate Eq.\eqref{eq:int beta}, which
completes the proof of Lemma \ref{lem:diff beta}.
\end{proof}

\subsection{\label{sub:sol ode for beta}Implicit formula for $\beta_{N}$}

Lemma \ref{lem:imp beta} gives an implicit solution of \eqref{eq:diff beta} with initial condition $\beta_{N}\left(0\right)=1.$
Before stating and proving the proposition, let us give a heuristic computation to explain where that proposition comes from,
without checking if the operations performed are legal or not.

Define the function $\gamma_{N}\left(t\right)=t\beta_{N}\left(t\right).$ It follows from Eq.\eqref{eq:diff beta} that $\gamma_{N}$ satisfies 
\[\frac{\gamma_{N}'\left(t\right)}{\gamma_{N}\left(t\right)\left(2-C_{N}\left(\gamma_{N}\left(t\right)\right)\right)}=\frac{1}{t},\]
so
\[\int_{a}^{\gamma_{N}\left(t\right)}\frac{dx}{x\left(2-C_{N}\left(x\right)\right)}=\log t+b\]
for some constants $a,b$. Using $\int_{a}^{\gamma_{N}\left(t\right)}\frac{dx}{x}=\log t+\log\left(\beta_{N}\left(t\right)/a\right),$ we get
\begin{equation}
  \int_{a}^{\gamma_{N}\left(t\right)}\frac{C_{N}\left(x\right)-1}{x\left(2-C_{N}\left(x\right)\right)}dx=-\log\beta_{N}\left(t\right)+b'\label{eq:int formula gamma}
\end{equation}
for another constant $b'$. Finally, by plugging in $\beta_{N}\left(0\right)=1$ and $\gamma_{N}\left(0\right)=0$, we can evaluate $b'$, which gives 
\[\int_{0}^{t\beta_{N}\left(t\right)}\frac{C_{N}\left(x\right)-1}{x\left(2-C_{N}\left(x\right)\right)}dx=-\log\beta_{N}\left(t\right).\]
This suggests the following lemma.

\begin{lem}
\label{lem:imp beta} For $t\in\left[0,1\right]$, $\beta_{N}\left(t\right)$
is the unique positive solution of the equation in $z$
\begin{equation}
\int_{0}^{tz}\frac{C_{N}\left(x\right)-1}{x\left(2-C_{N}\left(x\right)\right)}dx+\log z=0,\label{eq:implicit beta}\end{equation}
 with the constraint $tz<x_{N}$, where $x_{N}$
is the unique positive solution of $C_{N}\left(x\right)-2=0.$ 
\end{lem}

\begin{proof} Let us fix $N.$ First, the polynomial $C_{N}\left(x\right)-2$ has a positive derivative for $x>0$, it has thus exactly one non-negative root $x_N$, and this root has multiplicity one. Note that $x_{N}>1/4$, since $C(x)>C_{N}\left(x\right)$
for $x\in\left(0,1/4\right],$ and $C\left(1/4\right)=2.$ ($C_{N}\left(x\right)$
and $C\left(x\right)$ are close for large $N,$ this also suggests
that the root is close to $1/4$ for large $N$: we will indeed prove
that in the following.)

Let us prove that for $t\in\left[0,1\right],$ there
is exactly one non-negative solution of \eqref{eq:implicit beta}
with $tz<x_{N}.$ The integrand
in \eqref{eq:implicit beta} is positive, and it is well defined at $0$ since $C_{N}\left(x\right)=1+x+O\left(x^{2}\right)$ as $x \to 0$ ($N\geq1$). As $x \nearrow x_{N}$, this integrand behaves like $\frac{\kappa}{x_{N}-x}$ for some positive constant $\kappa$ (using that the positive root $x_N$ of $C_{N}\left(x\right)-2$ has multiplicity one). Hence, 
\begin{equation}
  \int_{0}^{x_{N}}\frac{C_{N}\left(x\right)-1}{x\left(2-C_{N}\left(x\right)\right)}dx=\infty.\label{eq:int till xn}
\end{equation}
On the other hand,
\[\int_{0}^{z}\frac{C_{N}\left(x\right)-1}{x\left(2-C_{N}\left(x\right)\right)}dx<\infty\]
for $z\in[0,x_{N})$. This shows that for every $t\in\left[0,1\right]$,
there is exactly one positive real number $u_N\left(t\right)$
which satisfies the equation \eqref{eq:implicit beta}, and $tu_N\left(t\right)<x_{N}.$

To complete the proof of Lemma \ref{lem:imp beta}, it is enough to show that $u_N$ is differentiable, that
\begin{equation}
  u_N'(t)=-\frac{u_N(t)}{t}\big[C_N(tu_N(t))-1\big] \label{eq:ode}
\end{equation}
for $t\in[0,1]$, and that $u_N(0)=1.$ Indeed, as already noted in Remark \ref{rem:ode beta}, the differential equation \eqref{eq:ode} has a unique solution.
A substitution into \eqref{eq:implicit beta} shows that $u_N\left(0\right)=1$. It is easy to check the conditions
of the implicit function theorem, and get that $u_N\left(t\right)$
is a differentiable function with derivative satisfying
\[\left(tu_N'\left(t\right)+u_N\left(t\right)\right)\frac{C_{N}\left(tu_{N}\left(t\right)\right)-1}{tu_{N}\left(t\right)\left(2-C_{N}\left(tu_{N}\left(t\right)\right)\right)}=-\frac{u_{N}'\left(t\right)}{u_{N}\left(t\right)},\]
from which simple computations give \eqref{eq:ode}. This completes the proof of Lemma \ref{lem:imp beta}.
\end{proof}


\subsection{\label{sub:bound beta}Bounds on $\beta_{N}$}

We now compare $\beta_{N}$ with the corresponding function in Aldous'
paper \cite{Aldous2000}, where clusters are frozen as soon as
they become infinite. In Aldous' model, one has
\[\beta_{\infty}\left(t\right):=\mathbb{P}_{\infty}\left(e_{0}\mbox{ is closed at time }t\right)=\begin{cases}
1-t & \text{ if } t\in\left[0,1/2\right],\\
\frac{1}{4t} & \text{ if } t\in[1/2,1].\end{cases}\]
 The following bounds hold true: 
\begin{lem}
\label{thm:bound beta}We have
$$0\leq\beta_{N}\left(t\right)-\beta_{\infty}\left(t\right)\leq2\left(x_{N}- 1/4\right) \quad \text{for all $t\in\left[0,1\right]$},$$
where $x_{N}$ ($>1/4$) is the unique positive root of the polynomial $C_{N}\left(x\right)-2.$ 
\end{lem}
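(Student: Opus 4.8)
The plan is to exploit the implicit characterization from Lemma~\ref{lem:imp beta}, which expresses both $\beta_N$ and $\beta_\infty$ as solutions of equations of the same structure, differing only through $C_N$ versus $C$. First I would establish the lower bound $\beta_N(t)\geq\beta_\infty(t)$. Intuitively this should follow from monotonicity: since $C_N(x)\leq C(x)$ for $x\in(0,1/4]$, the $N$-parameter process freezes \emph{less} readily than Aldous' process (a cluster must reach a finite threshold rather than become infinite, but from the root's perspective the constraint $C_N\leq C$ makes the root edge close less quickly), so the root edge stays closed at least as long. Concretely, I would compare the two integral equations \eqref{eq:implicit beta} — one with $C_N$ and cutoff $x_N$, the formal $N=\infty$ version with $C$ and cutoff $1/4$ — and argue that because the integrand for finite $N$ is pointwise smaller (the numerator $C_N-1$ is smaller and the denominator $2-C_N$ is larger on the relevant range), the solution $z=\beta_N(t)$ must be at least as large as $\beta_\infty(t)$. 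Alternatively, and perhaps more cleanly, one can verify the ODE comparison directly: both $\beta_N$ and $\beta_\infty$ satisfy $\beta'=-\tfrac{\beta}{t}[C_\bullet(t\beta)-1]$ with the same initial data, and since $C_N\leq C$, a standard differential-inequality (Gronwall-type) argument gives $\beta_N\geq\beta_\infty$ on $[0,1]$.

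For the upper bound I would again use the implicit equation. Write $\beta_N(t)=z$, so that \eqref{eq:implicit beta} holds with upper limit $tz$. The idea is to split the integral $\int_0^{tz}\frac{C_N(x)-1}{x(2-C_N(x))}\,dx$ by comparing it to the corresponding integral for $C$, controlling the discrepancy. Since $C_N(x)=C(x)$ minus the tail $\sum_{k>N}c_kx^k$, on $[0,1/4]$ the two integrands are close, and the only place the integrals can differ substantially is near the respective singularities at $x_N$ and $1/4$. The key quantitative input is that $x_N-1/4$ measures exactly how far the finite-$N$ singularity has moved, and the stated bound $2(x_N-1/4)$ suggests that the excess in $\beta_N$ over $\beta_\infty$ is, to leading order, twice this gap. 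I would make this precise by a change of variables or by estimating $\log(\beta_N/\beta_\infty)$ directly from subtracting the two implicit equations.

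The cleanest route to the upper bound is likely the following. Define $G_N(u)=\int_0^u\frac{C_N(x)-1}{x(2-C_N(x))}\,dx$ for $u\in[0,x_N)$, so that \eqref{eq:implicit beta} reads $G_N(t\beta_N(t))=-\log\beta_N(t)$, and similarly $G_\infty(t\beta_\infty(t))=-\log\beta_\infty(t)$ with $G_\infty$ defined using $C$ on $[0,1/4)$. Since $C_N\leq C$, we have $G_N(u)\leq G_\infty(u)$ pointwise on $[0,1/4)$, and $G_N$ extends further, to $[0,x_N)$. I would exploit the fact that $G_\infty$ can be computed in closed form: using $C(x)-1=xC(x)^2$ and $2-C(x)=\sqrt{1-4x}\cdot C(x)$ (both following from the quadratic $xC^2-C+1=0$), the integrand simplifies to $\frac{C(x)}{\sqrt{1-4x}}$, whose antiderivative is elementary. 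This closed form pins down $\beta_\infty$ exactly and gives a clean target against which to bound $\beta_N$, reducing the upper bound to estimating $G_N-G_\infty$ near the singularity in terms of $x_N-1/4$.

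The main obstacle I anticipate is the upper bound near the singularity: the integrand blows up like $\kappa/(x_N-x)$ as $x\nearrow x_N$, so controlling how much the finite-$N$ integral can exceed the $N=\infty$ integral requires tracking the singular behavior carefully and ruling out that $\beta_N$ overshoots by more than the claimed $2(x_N-1/4)$. In particular one must ensure the argument $t\beta_N(t)$ stays safely below $x_N$ (as guaranteed by Lemma~\ref{lem:imp beta}) and convert the gap in singularity locations into the stated additive gap in $\beta$. I expect the factor $2$ and the linear dependence on $x_N-1/4$ to emerge from the local expansion of $C_N-2\approx C_N'(x_N)(x-x_N)$ together with the identity $\beta_\infty=\tfrac{1}{4t}$ on $[1/2,1]$, so the estimate should be sharpest precisely in the frozen regime $t>1/2$.
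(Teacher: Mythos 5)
Your lower bound is essentially fine, and the ODE-comparison route you sketch is a legitimate alternative to the paper's argument: $\beta_\infty$ does satisfy the formal $N=\infty$ equation $\beta'=-\tfrac{\beta}{t}[C(t\beta)-1]$ on all of $[0,1]$ (a direct check from the explicit formula, using $C(t(1-t))=\tfrac{1}{1-t}$ for $t\le 1/2$ and $C(1/4)=2$), its trajectory keeps $t\beta_\infty(t)\le 1/4$ where $C_N\le C$, so $\beta_\infty$ is a subsolution of the $\beta_N$-equation and the standard comparison theorem gives $\beta_N\ge\beta_\infty$. (By contrast, your first suggested route — comparing the two \emph{implicit} equations — breaks on $(1/2,1]$: there $t\beta_\infty(t)\equiv 1/4$ and $2-C(1/4)=0$, so the separation of variables degenerates and $\beta_\infty$ does \emph{not} satisfy the $N=\infty$ analogue of \eqref{eq:implicit beta}; indeed $G_\infty(1/4)+\log\beta_\infty(t)=\log\tfrac{1}{2t}\neq 0$ for $t>1/2$.) The paper instead derives the lower bound from two elementary monotonicity facts: $\beta_N(t)-1+t\ge 0$ (Remark \ref{rem:lbound beta prime}) handles $[0,1/2]$, and the monotonicity of $t\beta_N(t)$ together with $\beta_N(1/2)\ge 1/2$ handles $[1/2,1]$.

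The genuine gap is the upper bound, which you leave as an acknowledged "main obstacle" rather than proving. No singularity analysis of $G_N-G_\infty$ is needed, and it is doubtful your plan would produce the clean constant $2(x_N-1/4)$: the actual argument is two lines. For $t\in[1/2,1]$, Lemma \ref{lem:imp beta} gives $t\beta_N(t)<x_N$, hence $\beta_N(t)-\beta_\infty(t)<\tfrac{x_N}{t}-\tfrac{1}{4t}=\tfrac{1}{t}\left(x_N-\tfrac14\right)\le 2\left(x_N-\tfrac14\right)$ — you list both ingredients ($t\beta_N<x_N$ and $\beta_\infty=\tfrac{1}{4t}$) but never combine them into this one-line estimate. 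More seriously, you have no mechanism at all for $t\in[0,1/2]$, where $\beta_\infty(t)=1-t$ and the constraint $t\beta_N(t)<x_N$ gives only the useless bound $\beta_N(t)<x_N/t$. The missing ingredient is Eq.\eqref{eq:prop beta}: $\beta_N(t)-1+t$ is nonnegative and \emph{increasing}, so for $t\le 1/2$ one has $\beta_N(t)-(1-t)\le\beta_N(1/2)-1/2\le 2\left(x_N-\tfrac14\right)$, transferring the bound at $t=1/2$ back to all of $[0,1/2]$. Without this (purely probabilistic) monotonicity input, your proposed analytic comparison of the implicit equations would have to control $G_\infty(t\beta_\infty)-G_N(t\beta_N)$ uniformly, which is substantially harder and is not carried out in your proposal.
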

\begin{proof}
From Lemma \ref{lem:imp beta}, we know that $t\beta_{N}\left(t\right)<x_{N},$
which gives the desired upper bound for $t\in\left[\frac{1}{2},1\right].$ We also know (Remark \ref{rem:lbound beta prime}) that $\beta_{N}\left(t\right)-1+t$ is non-negative and increasing. Hence,
\begin{equation}
  0\leq\beta_{N}\left(t\right)-1+t\leq\beta_{N}\left(1/2\right)-1/2\leq2\left(x_{N}-1/4\right)\label{eq:bound beta 1}
\end{equation}
for $t\in\left[0,\frac{1}{2}\right]$, by using also the previously proven upper
bound at $t=\frac{1}{2}$. We have thus established the desired lower and upper bounds for $t\in\left[0,\frac{1}{2}\right]$. In particular, for $t=\frac{1}{2},$ we obtain that $\beta_N(1/2) \geq 1/2$.

Now, let us note that $t\beta_N(t)$ is increasing: this is an easy consequence of two facts, that $\beta_{N}(t)$ is decreasing and that the integrand in the left hand-side of \eqref{eq:implicit beta} is positive. Combined with the bound $\beta_N(1/2) \geq 1/2$, we get
\[\frac{1}{4}\leq\frac{1}{2}\beta_N(1/2)\leq t\beta_N(t),\] from which the desired lower bound for  $t\in\left[\frac{1}{2},1\right]$ follows readily.
This completes the proof of Lemma \ref{thm:bound beta}.
\end{proof}


\subsection{\label{sub:conv beta}Convergence to $\beta_{\infty}$}

It follows from Lemma \ref{thm:bound beta} that in order to prove uniform convergence of the functions
$\beta_{N}$ to $\beta_{\infty}$, it is enough to prove that $x_{N}\rightarrow1/4$
as $N\rightarrow\infty.$ We prove a bit more, namely we give an upper bound on the rate of convergence.
\begin{prop}
\label{pro:bound xn}There exists a constant $K$ such that $x_{N}-\frac{1}{4}<\frac{K}{N}.$ In particular,
$$0\leq\beta_{N}\left(t\right)-\beta_{\infty}\left(t\right)\leq \frac{2 K}{N} \quad \text{for all $t \in [0,1]$},$$
so that $\beta_N \to \beta_{\infty}$ uniformly on $[0,1]$.
\end{prop}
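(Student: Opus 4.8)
The plan is to reduce the claim to two quantitative estimates about the Catalan coefficients evaluated at $1/4$: an upper bound on the ``tail'' $R_{N}:=2-C_{N}(1/4)=\sum_{k>N}c_{k}4^{-k}$, and a lower bound on the derivative $C_{N}'(1/4)$. Recall from the generating function $C(x)=\frac{2}{1+\sqrt{1-4x}}$ that $C(1/4)=2$, so $C_{N}(1/4)=2-R_{N}$ with $R_{N}>0$; in particular $C_{N}(1/4)<2=C_{N}(x_{N})$, consistent with $x_{N}>1/4$ and the strict monotonicity of $C_{N}$ on $(0,\infty)$ already used in the proof of Lemma \ref{lem:imp beta}.

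First I would record the classical two-sided estimate $\binom{2k}{k}4^{-k}=\Theta(k^{-1/2})$ (from Stirling's formula), which gives $c_{k}4^{-k}=\frac{\binom{2k}{k}4^{-k}}{k+1}=\Theta(k^{-3/2})$. Comparing with $\int_{N}^{\infty}x^{-3/2}\,dx=2N^{-1/2}$ then yields $R_{N}\le c_{1}N^{-1/2}$ for some constant $c_{1}$. Using the same estimate from below for the derivative, $C_{N}'(1/4)=\sum_{k=1}^{N}k\,c_{k}4^{-(k-1)}=4\sum_{k=1}^{N}k\,c_{k}4^{-k}\ge c_{2}\sum_{k=1}^{N}k^{-1/2}\ge c_{3}N^{1/2}$ for constants $c_{2},c_{3}>0$. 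The point is that these have matching powers of $N$, so their ratio is of order $N^{-1}$.

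Next I would combine the two estimates using the convexity of $C_{N}$ on $[0,\infty)$ (all coefficients $c_{k}$ are nonnegative, so $C_{N}''\ge 0$ there). Convexity gives $C_{N}(x_{N})\ge C_{N}(1/4)+C_{N}'(1/4)\,(x_{N}-1/4)$; since the left side equals $2$ and $C_{N}(1/4)=2-R_{N}$, this rearranges to $C_{N}'(1/4)\,(x_{N}-1/4)\le R_{N}$, hence $x_{N}-\tfrac{1}{4}\le \frac{R_{N}}{C_{N}'(1/4)}\le \frac{c_{1}N^{-1/2}}{c_{3}N^{1/2}}=\frac{c_{1}/c_{3}}{N}$, which is the asserted bound with $K=c_{1}/c_{3}$. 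The ``in particular'' statement is then immediate from Lemma \ref{thm:bound beta}, which bounds $0\le\beta_{N}(t)-\beta_{\infty}(t)\le 2(x_{N}-1/4)$ uniformly in $t\in[0,1]$: plugging in gives $0\le\beta_{N}(t)-\beta_{\infty}(t)\le 2K/N$ for every $t$, and uniform convergence follows because the right-hand side is independent of $t$ and tends to $0$.

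The main obstacle is pinning down the two Catalan asymptotics with enough explicit control: nothing deep is involved beyond Stirling's approximation (or the classical bounds on central binomial coefficients), but care is needed to extract clean constants and, crucially, to ensure the tail exponent $-1/2$ and the derivative exponent $+1/2$ match so that the quotient is exactly $O(1/N)$ rather than a weaker rate. One should also be careful that the convexity step relies on $C_{N}'(1/4)$, the slope at the \emph{left} endpoint, which a lower bound on the derivative at $1/4$ correctly controls; it is precisely convexity that makes this one-sided slope bound sufficient.
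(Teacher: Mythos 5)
Your proof is correct, but it takes a genuinely different and more elementary route than the paper. The paper establishes the rate $x_N-\tfrac14=O(1/N)$ as a corollary of a scaling limit (Lemma \ref{lem:conv Cn}): it shows that $\sqrt{N}\bigl(C_N(\tfrac14+\tfrac{x}{4N})-2\bigr)$ converges locally uniformly to an explicit function $F$, picks $K$ with $F(K)>1$ to force $C_N(\tfrac14+\tfrac{K}{4N})>2$, and concludes $x_N<\tfrac14+\tfrac{K}{4N}$ by monotonicity of $C_N$. This requires the fairly lengthy Stirling/Riemann-sum computation in the proof of Lemma \ref{lem:conv Cn}. You instead combine three cheap facts: the tail bound $2-C_N(1/4)=\sum_{k>N}c_k4^{-k}=O(N^{-1/2})$, the derivative lower bound $C_N'(1/4)\geq c_3 N^{1/2}$ (both immediate from $c_k4^{-k}=\Theta(k^{-3/2})$), and convexity of $C_N$ on $[0,\infty)$, which places the graph above its tangent at $1/4$ and yields $x_N-\tfrac14\leq (2-C_N(1/4))/C_N'(1/4)=O(1/N)$. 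All three steps are sound (the convexity inequality $C_N(x_N)\geq C_N(1/4)+C_N'(1/4)(x_N-1/4)$ is in the right direction since $x_N>1/4$), and the deduction of the uniform bound on $\beta_N-\beta_\infty$ from Lemma \ref{thm:bound beta} is exactly as in the paper. What the paper's heavier approach buys is the identification of the precise limit profile $F$, which is of independent interest but is not used elsewhere in the argument; your version is shorter and would suffice for every subsequent application of Proposition \ref{pro:bound xn}, including the general size functions of Section \ref{sec:other size}. The only cosmetic point is that you obtain $x_N-\tfrac14\leq K/N$ rather than a strict inequality, which is immaterial (enlarge $K$).
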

Proposition \ref{pro:bound xn} follows from the following lemma. 
\begin{lem}
\label{lem:conv Cn}The functions $\sqrt{N}\left(C_{N}\left(\frac{1}{4}+\frac{x}{4N}\right)-2\right)$
converge locally uniformly in $x\in\mathbb{R}$ as $N\rightarrow\infty$
to the function\[
F\left(x\right)=\frac{2}{\sqrt{\pi}}\left(\sqrt{x}\int_{0}^{x}\frac{e^{y}}{\sqrt{y}}dy-e^{x}\right).\]
 
\end{lem}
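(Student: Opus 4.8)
The plan is to analyze the partial sums $C_N$ of the Catalan generating function near the critical value $x=1/4$ using the known asymptotics of the Catalan numbers, namely $c_k \sim 4^k/(\sqrt{\pi}\, k^{3/2})$. The natural rescaling is to set $x = \frac{1}{4} + \frac{u}{4N}$ so that $x^k = 4^{-k}(1+u/N)^k \approx 4^{-k} e^{ku/N}$, and then track how the tail and head of the series $\sum_{k=0}^N c_k x^k$ behave. Since $C(1/4)=2$, writing $C_N(x) - 2 = (C_N(x) - C(x)) + (C(x) - 2)$ should separate the error into a \emph{tail} contribution $-\sum_{k>N} c_k x^k$ and a \emph{boundary-layer} contribution $C(x) - C(1/4)$ coming from moving $x$ away from $1/4$; both turn out to be of order $N^{-1/2}$, which explains the $\sqrt{N}$ normalization.

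First I would substitute the asymptotic expansion $c_k = \frac{4^k}{\sqrt{\pi}\, k^{3/2}}(1 + O(1/k))$ (this is standard, e.g. from Stirling's formula) and reduce each piece to a Riemann sum. For the tail, $\sum_{k>N} c_k x^k \approx \frac{1}{\sqrt{\pi}} \sum_{k>N} k^{-3/2} e^{ku/N}$; with the change of index $k = Nw$ this becomes, after multiplying by $\sqrt{N}$, a Riemann sum converging to $\frac{1}{\sqrt{\pi}} \int_1^\infty w^{-3/2} e^{uw}\,dw$ when $u<0$ and requiring more care (integration by parts to exhibit the $e^{x}$ term) for general $u$. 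For the boundary-layer term I would similarly expand $C(x)-2$ using the explicit formula $C(x) = \frac{1-\sqrt{1-4x}}{2x}$; near $x=1/4$ one has $1-4x = -u/N$, so $\sqrt{1-4x}$ contributes a term of order $N^{-1/2}$, giving the leading $\sqrt{x}\int_0^x \frac{e^y}{\sqrt y}\,dy$ structure after recombining with the partial tail sum. Matching the two contributions should produce exactly $F(x) = \frac{2}{\sqrt\pi}\big(\sqrt{x}\int_0^x \frac{e^y}{\sqrt y}\,dy - e^x\big)$.

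The main obstacle will be controlling the convergence \emph{uniformly} on compact sets of $x$ (including $x$ of both signs and $x$ near $0$) while keeping the error terms from the Catalan asymptotic expansion under control. The $O(1/k)$ correction in $c_k$ contributes a sum of order $\sum_{k\le N} k^{-5/2} 4^k x^k$, which I expect to be negligible after the $\sqrt{N}$ scaling, but this needs a clean uniform bound; the delicate point is the interchange near $x=0$ where the boundary-layer and tail expansions must glue consistently so that $F$ is the single limiting function valid for all $x$. I would handle this by splitting the sum at an intermediate index (say $k \le \varepsilon N$ versus $k > \varepsilon N$), treating the head by direct comparison with $C(x)$ and the tail by the Euler--Maclaurin / Riemann-sum estimate, then letting $\varepsilon$ be chosen so that both remainders are $o(N^{-1/2})$ uniformly on the compact set. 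Once Lemma \ref{lem:conv Cn} is established, Proposition \ref{pro:bound xn} follows quickly: since $F$ has a unique zero $x^\ast$ and $F'(x^\ast)\neq 0$, the rescaled root of $C_N(x)-2$ converges, i.e. $4N(x_N - 1/4) \to x^\ast$, giving $x_N - 1/4 = O(1/N)$, and the stated bound on $\beta_N - \beta_\infty$ then follows directly from Lemma \ref{thm:bound beta}.
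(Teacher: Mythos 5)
Your overall strategy (Catalan asymptotics via Stirling, the rescaling $x=\frac14+\frac{u}{4N}$, Riemann sums after the substitution $k=Nw$) is the right one and close in spirit to the paper's, but your central decomposition has a genuine gap. You write $C_N(x)-2=\bigl(C_N(x)-C(x)\bigr)+\bigl(C(x)-2\bigr)$, i.e.\ a tail $-\sum_{k>N}c_kx^k$ plus a boundary-layer term computed from the closed form of $C$. For $u>0$, i.e.\ $x>\frac14$, \emph{both} pieces are undefined: the series $\sum_k c_kx^k$ diverges (your own Riemann-sum limit $\int_1^\infty w^{-3/2}e^{uw}\,dw$ is $+\infty$ for $u>0$), and the closed form $\frac{1-\sqrt{1-4x}}{2x}$ ceases to be real. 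You flag this as ``requiring more care,'' but it is not a refinement of the same computation --- no integration by parts rescues a splitting into two infinite quantities. And $u>0$ is exactly the regime the lemma is needed for: Proposition \ref{pro:bound xn} uses the convergence at some $K>0$ with $F(K)>1$ to conclude $x_N<\frac14+\frac{K}{4N}$.

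The fix is a different (and simpler) decomposition, which is what the paper does: subtract $2=C(1/4)=\sum_{k\geq0}c_k4^{-k}$ \emph{termwise}, so that
\[
C_N\Bigl(\tfrac14+\tfrac{u}{4N}\Bigr)-2=\sum_{k=0}^{N}c_k4^{-k}\Bigl(\bigl(1+\tfrac{u}{N}\bigr)^k-1\Bigr)-\sum_{k=N+1}^{\infty}c_k4^{-k}.
\]
The first sum is finite, hence defined for every real $u$, and after splitting at $k=\lfloor\sqrt N\rfloor$ it converges, times $\sqrt N$, to $\frac{1}{\sqrt\pi}\int_0^1y^{-3/2}(e^{uy}-1)\,dy$; the second is the tail of a convergent series at $1/4$, independent of $u$, and times $\sqrt N$ it tends to $\frac{2}{\sqrt\pi}$. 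An integration by parts and a change of variable then yield the stated $F$. Your version does agree with this for $u\leq0$ (one can check that $-2\sqrt{-u}-\frac{1}{\sqrt\pi}\int_1^\infty w^{-3/2}e^{uw}\,dw$ coincides with $F(u)$ there), but as written the argument covers only half the real line and misses the half that matters. A minor further point: your last sentence invokes a unique zero of $F$ with nonvanishing derivative; this is provable ($F'(x)=\frac{1}{\sqrt{\pi x}}\int_0^x\frac{e^y}{\sqrt y}\,dy>0$ for $x>0$) but unnecessary, since the paper only needs one $K$ with $F(K)>1$ together with the monotonicity of $x\mapsto C_N(x)$.
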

\begin{proof}[Proof of Proposition \ref{pro:bound xn}] Let us take
$K\in\mathbb{R},\, K>0$ such that $F\left(K\right)>1$ (such a $K$
exists, since $F\left(x\right)\sim\frac{1}{\sqrt{\pi}}\frac{e^{x}}{x}\rightarrow\infty$
as $x\rightarrow\infty$). Then by Lemma \ref{lem:conv Cn}, we have
that for large $N$, 
\[\sqrt{N}\left(C_{N}\left(\frac{1}{4}+\frac{K}{4N}\right)-2\right)\geq F\left(K\right)-\frac{1}{2}>1-\frac{1}{2}>\frac{1}{2},\]
and so
\[C_{N}\left(\frac{1}{4}+\frac{K}{4N}\right)-2>0.\]
For any fixed $N,$ the function $x\mapsto C_{N}\left(\frac{1}{4}+\frac{x}{4N}\right)-2$
is increasing on $[0,\infty).$ Hence, $\frac{1}{4}+\frac{K}{4N}>x_{N},$
that is $x_{N}-\frac{1}{4} < \frac{K}{4N}.$
\end{proof}

\begin{proof}[Proof of Lemma \ref{lem:conv Cn}] Using that
\[2 = C(1/4) = \sum_{k=0}^{\infty}\frac{\binom{2k}{k}}{k+1}4^{-k},\]
we get
\begin{align}
\sqrt{N}\bigg(C_{N} & \bigg(\frac{1}{4}+\frac{x}{4N}\bigg)-2\bigg)\nonumber \\
& = \sqrt{N}\sum_{k=0}^{N} \frac{\binom{2k}{k}}{k+1} 4^{-k} \left(\left(1+x/N\right)^{k}-1\right) - \sqrt{N} \sum_{k=N+1}^{\infty}\frac{\binom{2k}{k}}{k+1}4^{-k}\nonumber \\
& =: (A) - (B).\label{eq:lem conv Cn - 1}
\end{align}
We will use the following version of Stirling's formula:
\begin{equation} \label{stirling}
k!=\sqrt{2\pi k}\left(\frac{k}{e}\right)^{k}e^{\lambda_{k}} \quad \text{with $\frac{1}{12k+1}<\lambda_{k}<\frac{1}{12k}$}.
\end{equation}

Using this formula, we obtain that
\begin{equation*}
(B) = \frac{1}{\sqrt{\pi}}\sum_{k=N+1}^{\infty} \frac{\sqrt{N}}{\sqrt{k}\left(k+1\right)} e^{\lambda_{2k}-2\lambda_{k}} = \frac{1}{\sqrt{\pi}}\frac{1}{N}\sum_{k=N+1}^{\infty}\frac{1}{\sqrt{\frac{k}{N}}\frac{k+1}{N}}e^{\lambda_{2k}-2\lambda_{k}},
\end{equation*}
and thus
\begin{align}
(B) & =\frac{1}{\sqrt{\pi}}\left(1+O\left(N^{-1}\right)\right)\frac{1}{N}\sum_{k=N+1}^{\infty}\frac{1}{\sqrt{\frac{k}{N}}\frac{k}{N}}\nonumber \\
& =\frac{1}{\sqrt{\pi}}\left(1+O\left(N^{-1}\right)\right)\left(\int_{1}^{\infty}y^{-3/2}dy+O\left(N^{-3/2}\right)\right) = \frac{2}{\sqrt{\pi}}+O\left(N^{-1}\right).\label{eq:lem conv Cn - 1.1}
\end{align}


We now divide (A) into two parts. On the one hand, using Eq.\eqref{stirling}, we get that for some universal constants
$C$, $C'$,
\begin{align}
  \sqrt{N}\Bigg|\sum_{k=0}^{\lfloor\sqrt{N}\rfloor}\frac{\binom{2k}{k}}{k+1} & 4^{-k}\left(\left(1+x/N\right)^{k}-1\right)\Bigg|\nonumber \\
  & \leq\sqrt{N}C\sum_{k=1}^{\lfloor\sqrt{N}\rfloor}\frac{1}{\sqrt{k}\left(k+1\right)}\left|\left(1+\frac{x}{N}\right)^{k}-1\right|\nonumber \\
  & \leq\frac{\left|x\right|}{\sqrt{N}}C\sum_{k=1}^{\lfloor\sqrt{N}\rfloor}\frac{1}{\sqrt{k}\left(k+1\right)}\left(1+\left(1+\frac{\left|x\right|}{N}\right)+\ldots+\left(1+\frac{\left|x\right|}{N}\right)^{k-1}\right)\nonumber \\
  & \leq\frac{\left|x\right|}{\sqrt{N}}C\sum_{k=1}^{\lfloor\sqrt{N}\rfloor}\frac{1}{\sqrt{k}}\left(1+\frac{\left|x\right|}{N}\right)^{k-1}\nonumber \\
  & \leq\frac{\left|x\right|}{\sqrt{N}}C\left(1+\frac{\left|x\right|}{N}\right)^{\sqrt{N}}\sum_{k=1}^{\lfloor\sqrt{N}\rfloor}\frac{1}{\sqrt{k}}\nonumber \\
  & \leq C'\left|x\right|e^{\left|x\right|}N^{-1/4}.\label{eq:lem conv Cn - 1.2}
\end{align}

%

On the other hand, using again Eq.\eqref{stirling},
\begin{align}
  \sqrt{N} \sum_{k=\lfloor\sqrt{N}\rfloor+1}^{N}\frac{\binom{2k}{k}}{k+1} & 4^{-k}\left(\left(1+x/N\right)^{k}-1\right)\nonumber \\
  & =\sqrt{\frac{N}{\pi}}\sum_{k=\lfloor\sqrt{N}\rfloor+1}^{N}\frac{1}{\sqrt{k}\left(k+1\right)}e^{\lambda_{2k}-2\lambda_{k}}\left(\left(1+\frac{x}{N}\right)^{k}-1\right)\nonumber \\
  & =\frac{1}{\sqrt{\pi}}\left(1+O\left(N^{-1/2}\right)\right)\frac{1}{N}\sum_{k=\lfloor\sqrt{N}\rfloor+1}^{N}\frac{1}{\sqrt{\frac{k}{N}}\frac{k+1}{N}}\left(\left(1+\frac{x}{N}\right)^{N\left(k/N\right)}-1\right)\nonumber \\
  & =\frac{1}{\sqrt{\pi}}\left(1+O\left(N^{-1/2}\right)\right)\left(\int_{1/\sqrt{N}}^{1}y^{-3/2}\left(e^{xy}-1\right)dy+e^{\left|x\right|}O\left(N^{-3/2}\right)\right)\nonumber \\
  & =\frac{1}{\sqrt{\pi}}\int_{0}^{1}y^{-3/2}\left(e^{xy}-1\right)dy+e^{\left|x\right|}O\left(N^{-1/4}\right).\label{eq:lem conv Cn - 1.3}
\end{align}

Substituting \eqref{eq:lem conv Cn - 1.1}, \eqref{eq:lem conv Cn - 1.2}
and \eqref{eq:lem conv Cn - 1.3} into \eqref{eq:lem conv Cn - 1},
we get
\begin{equation} \sqrt{N}\bigg(C_{N}\bigg( \frac{1}{4}+\frac{x}{4N}\bigg)-2\bigg) =\frac{1}{\sqrt{\pi}}\int_{0}^{1}y^{-3/2}\left(e^{xy}-1\right)dy-\frac{2}{\sqrt{\pi}}+\left(1+\left|x\right|\right)e^{\left|x\right|}O\left(N^{-1/4}\right).\label{eq:lem conv Cn - 2}
\end{equation}

Finally, an integration by parts gives 
\begin{align}
  \int_{0}^{1}y^{-3/2}\left(e^{xy}-1\right)dy & =\left[\frac{y^{-1/2}}{-1/2}\left(e^{xy}-1\right)\right]_{y=0}^{y=1}-\int_{0}^{1}\frac{y^{-1/2}}{-1/2}xe^{xy}dy\nonumber \\
  & =-2\left(e^{x}-1\right)+2x\int_{0}^{1}\frac{e^{xy}}{\sqrt{y}}dy,\label{eq:lem conv Cn - 3}
\end{align}
 and combining Eqs.\eqref{eq:lem conv Cn - 2} and \eqref{eq:lem conv Cn - 3}
(and a change of variable) completes the proof of Lemma \ref{lem:conv Cn}.
\end{proof}


\subsection{\label{sub:end proof main thm volume} Completion of the proof of Theorem \ref{thm:volume freeze main}}

Recall the notation $\mathcal{C}_{t}.$ Let $\left|C\right|<N$ be
a fixed cluster of the root vertex. By similar arguments as in the proof of Lemma \ref{lem:diff beta}, we have
\begin{equation}
  \mathbb{P}_{N}\left(\mathcal{C}_{t}=C\right)=t^{\left|C\right|}\beta_{N}\left(t\right)^{\left|\partial C\right|}=\beta_{N}\left(t\right)\left(t\beta_{N}\left(t\right)\right)^{\left|C\right|}.\label{eq:prob C equal C}
\end{equation}
(since $|\partial C| = |C|+1$). Hence for any fixed finite cluster $C$, we have, as $N\to\infty$,
\begin{equation}
  \mathbb{P}_{N}\left(\mathcal{C}_{t}=C\right)=\beta_{N}\left(t\right)\left(t\beta_{N}\left(t\right)\right)^{\left|C\right|}\rightarrow\beta_{\infty}\left(t\right)\left(t\beta_{\infty}\left(t\right)\right)^{\left|C\right|}=\mathbb{P}_{\infty}\left(\mathcal{C}_{t}=C\right),\label{eq:lim prob C equal C}
\end{equation}
which gives the first part of Theorem \ref{thm:volume freeze main}.

An argument similar to the beginning of the proof of Lemma \ref{lem:diff beta}
gives that \[
\mathbb{P}_{N}\left(k\leq\left|\mathcal{C}_{t}\right|<N\right)=\sum_{n=k}^{N-1}c_{n}\beta_{N}\left(t\right)\left(t\beta_{N}\left(t\right)\right)^{n}.\]
Lemma \ref{lem:imp beta} and Proposition \ref{pro:bound xn} then imply that $t\beta_{N}\left(t\right) < x_N \leq \frac{1}{4}+\frac{K'}{4N},$ hence
(using again Eq.\eqref{stirling})
\begin{align*}
  \mathbb{P}_{N}\left(k\leq\left|\mathcal{C}_{t}\right|<N\right) & \leq\beta_{N}\left(t\right)\sum_{n=k}^{N-1}\frac{\binom{2n}{n}}{n+1}\left(t\beta_{N}\left(t\right)\right)^{n}\\
  & \leq K_{1}\sum_{n=k}^{N-1}\frac{1}{\sqrt{n}}\frac{1}{n+1}\left(1+\frac{K'}{N}\right)^{n}\\
  & \leq K_{2}e^{K'}\sum_{n=k}^{\infty}\frac{1}{\sqrt{n}}\frac{1}{n+1}\\
  & \leq K_{3}\int_{k}^{\infty}\frac{dx}{x^{3/2}}=\frac{K_{4}}{\sqrt{k}}.
\end{align*}
It follows that
\begin{equation}
  \lim_{k\rightarrow\infty}\limsup_{N\rightarrow\infty}\mathbb{P}_{N}\left(k\leq\left|\mathcal{C}_{t}\right|<N\right)=0,\label{eq:lim prob big not freeze}
\end{equation}
which completes the second part of Theorem \ref{thm:volume freeze main}.

Now, using the trivial upper bound $\mathbb{P}_{N}\left(N\leq\left|\mathcal{C}_{t}\right|\right)\leq\mathbb{P}_{N}\left(k\leq\left|\mathcal{C}_{t}\right|\right)$ for $k\leq N$, we get
\begin{equation}	
  \limsup_{N\rightarrow\infty}\mathbb{P}_{N}\left(N\leq\left|\mathcal{C}_{t}\right|\right)\leq \lim_{k\rightarrow\infty} \limsup_{N\rightarrow\infty}\mathbb{P}_{N}\left(k\leq\left|\mathcal{C}_{t}\right|\right)= \lim_{k\rightarrow\infty} \mathbb{P}_{\infty}\left(k\leq\left|\mathcal{C}_{t}\right|\right) = \mathbb{P}_{\infty}\left(\left|\mathcal{C}_{t}\right|=\infty\right), \label{eq:limsup ubound - 2}
\end{equation}
where we used \eqref{eq:lim prob C equal C} for the first equality.

On the other hand, for all $k\in\mathbb{N}$, $k\leq N$, we have
\begin{equation}
\mathbb{P}_{N}\left(N\leq\left|\mathcal{C}_{t}\right|\right)=\mathbb{P}_{N}\left(k\leq\left|\mathcal{C}_{t}\right|\right)-\mathbb{P}_{N}\left(k\leq\left|\mathcal{C}_{t}\right|<N\right). \label{endofproof}
\end{equation}
Hence, taking first the limit infimum as $N \to \infty$, and then the limit as $k \to \infty$, we get
\begin{align}
  \liminf_{N\rightarrow\infty}\mathbb{P}_{N}\left(N\leq\left|\mathcal{C}_{t}\right|\right) &  \geq \lim_{k\rightarrow\infty} \liminf_{N\rightarrow\infty}\mathbb{P}_{N}\left(k\leq\left|\mathcal{C}_{t}\right|\right)- \lim_{k\rightarrow\infty} \limsup_{N\rightarrow\infty}\mathbb{P}_{N}\left(k\leq\left|\mathcal{C}_{t}\right|<N\right) \nonumber \\
  & = \mathbb{P}_{\infty}\left(\left|\mathcal{C}_{t}\right|=\infty\right) - 0,  \label{eq:liminf lbound - 1}
\end{align}
where for the last equality we used, respectively, \eqref{eq:lim prob C equal C} -- as in \eqref{eq:limsup ubound - 2} -- and \eqref{eq:lim prob big not freeze}.


Combining \eqref{eq:limsup ubound - 2} and \eqref{eq:liminf lbound - 1} provides the final part of Theorem \ref{thm:volume freeze main}. 
\begin{flushright}
  $\Box$
\end{flushright}

\section{\label{sec:other size}Proof of Theorem \ref{thm:gen size freeze main}}

In this section we give a brief outline of the changes required to deduce Theorem \ref{thm:gen size freeze main} 
from the arguments in Section \ref{sec: volume freeze}.

First, for any good size function $s$, the corresponding $N$-parameter frozen percolation process does exist. Indeed, conditions \ref{it: trans inv} and \ref{it: goes to inf} of Definition \ref{def: size measurement} ensure that the process is still a finite-range interacting particle system, and the general theory of such systems \cite{Liggett2005} provides existence, as in the case of volume.


In that previous case, the function $\frac{C_{N}\left(x\right)-1}{x}$
played an important role. It is the generating function of the
number of clusters of $v_{1}$ which do not contain the edge $e_{0}$ and have 
volume at most $N-1.$ For other good size functions $s,$ the following
generating function plays the role of $\frac{C_{N}\left(x\right)-1}{x}:$
\[G_{N}^{(s)}\left(x\right)=\sum_{k=0}^{\infty}a_{k,N-1}^{(s)}x^{k},\]
where $a_{k,N-1}^{(s)}$ denotes the number of clusters $C$ of $v_{1}$
for which $e_{0}\notin C,$ $\left|C\right|=k$ and $s(C)\leq N-1.$

Keeping this in mind, one can easily modify the proof of Theorem \ref{thm:volume freeze main}. We define the function $\beta_{N}^{(s)}:\left[0,1\right]\rightarrow\mathbb{R}$
as \[\beta_{N}^{(s)}\left(t\right):=\mathbb{P}_{N}^{(s)}\left(e_{0}\notin\mathcal{A}_{t}\right).\]
Using the conditions \ref{it: trans inv}, \ref{it: goes to inf} and \ref{it: inc size} of Definition \ref{def: size measurement}, by simple adjustments 
of the proof of Lemma \ref{lem:diff beta} we deduce that $\beta_{N}^{(s)}$ is differentiable, and that its derivative satisfies
\[(\beta_{N}^{(s)})'\left(t\right) = - \big(\beta_{N}^{(s)}\left(t\right)\big)^{2} G_{N}^{(s)} \left(t\beta_{N}^{(s)}\left(t\right)\right).\]
Moreover, it follows from the definition of $\beta_{N}^{(s)}$ that $\beta_{N}^{(s)}\left(0\right)=1.$ 

Recall that $x_N$, the unique positive root of $C_N(x)=2$, was another important quantity. Since in our present general setup
$G_{N}^{(s)}\left(x\right)$ plays the role of $\frac{C_{N}\left(x\right)-1}{x},$ the analogue of $x_N$ is the unique positive root of the equation $x G_{N}^{(s)}(x)=1,$
which we denote by $x_{N}^{(s)}$. Using the arguments of Section \ref{sub:sol ode for beta}, we deduce that for each
fixed $t,$ $\beta_{N}^{(s)}\left(t\right)$ is equal to the unique positive root of the equation in $z$ 
\[\int_{0}^{tz}\frac{G_{N}^{(s)}\left(x\right)}{1 - x G_{N}^{(s)}\left(x\right)}dx+\log z=0\] 
with the constraint $tz<x_{N}^{(s)}.$

By simple modifications of Section \ref{sub:bound beta}, we get that $0\leq\beta_{N}^{(s)}\left(t\right)-\beta_{\infty}\left(t\right)\leq2\left(x_{N}^{(s)}-\frac{1}{4}\right)$
for all $t\in\left[0,1\right],$ which is the analogue of Lemma \ref{thm:bound beta} in this general setting. By condition \ref{it: inc size} of Definition \ref{def: size measurement}, $a_{k,N-1}^{(s)}$ is an increasing function
of $N$ for each fixed $k.$ Moreover, since $s(C)$ is finite for all finite clusters $C,$ $a_{k,N-1}^{(s)}\uparrow c_{k+1}$ as $N\rightarrow\infty.$
Hence $G_{N}^{(s)}\left(x\right)\uparrow\frac{C\left(x\right)-1}{x}$ for
all $x\in\left[0,\frac{1}{4}\right],$ and $G_{N}^{(s)}\left(x\right)\uparrow\infty$ for $x>\frac{1}{4}.$ Thus $x_{N}^{(s)} \rightarrow \frac{1}{4}$ as $N\rightarrow\infty.$
By the aforementioned analogue of Lemma \ref{thm:bound beta}, we get that $\beta_{N}^{(s)}\rightarrow \beta_\infty$ point-wise.
This concludes the proof of the first part (Eq.\eqref{eq:finite cluster prob conv}) of Theorem \ref{thm:gen size freeze main}.

Note that up to now we did not use that $s$ satisfies Condition \ref{it: bounded by vol} of Definition \ref{def: size measurement}.
We use this condition to prove a rate of convergence for $x_{N}^{(s)},$ which was the key ingredient in the proof of \eqref{eq:bignotfrozen prob}. Condition \ref{it: bounded by vol} implies that $a_{k,N-1}^{(s)} \geq c_{k+1}$ for $k\leq N-1,$ hence
\[G_{N}^{(s)}\left(x\right)\geq\frac{C_{N}\left(x\right)-1}{x}\mbox{ for }x\geq0,\]
and thus $\frac{1}{4}\leq x_{N}^{(s)}\leq x_N = x_{N}^{(\left|.\right|)}$. Proposition
\ref{pro:bound xn} then implies that $0\leq x_{N}^{(s)}-\frac{1}{4}\leq\frac{K}{N},$
from which a computation similar to Section \ref{sub:end proof main thm volume}
completes the proof of Theorem \ref{thm:gen size freeze main}. 

\bibliographystyle{abbrv}
\bibliography{myreflist}

\end{document}